\documentclass[12pt,reqno]{amsart}
\usepackage{amsthm,amsfonts,amssymb,euscript}

\def\S{\mathbb{S}^{n-1}}
\def\a{{\alpha}}
\def\b{{\beta}}
\def\de{\delta}

\def\G{\Gamma}
\def\la{\lambda}
\def\p{{\partial}}
\def\rh{{\hat r}}
\def\Rh{{\hat R}}
\def\g{\gamma}
\def\nab{\nabla}

\def\gb{\bar{g}}
\def\ub{\bar{u}}
\def\vb{\bar{v}}
\def\Rb{\bar{R}}
\def\phib{\bar{\phi}}
\newtheorem{theorem}{Theorem}[section]
\newtheorem{lemma}[theorem]{Lemma}
\newtheorem{proposition}[theorem]{Proposition}
\newtheorem{corollary}[theorem]{Corollary}

\newtheorem{remark}[theorem]{Remark}

\setlength{\textwidth}{16cm} \setlength{\oddsidemargin}{0cm}
\setlength{\evensidemargin}{0cm}
\numberwithin{equation}{section}

\begin{document}\title[On Strong Unique Continuation]{On Strong
Unique Continuation of Coupled Einstein Metrics}
\author{Willie Wai-Yeung Wong}
\address{Department of Pure Mathematics and Mathematical Statistics\\ University of Cambridge}
\email{ww278@dpmms.cam.ac.uk}
\thanks{Most of this work was completed when W.W.Y.W.~was a graduate
student at Princeton University.}
\author{Pin Yu}
\address{Department of Mathematics\\ Princeton University}
\email{pinyu@math.princeton.edu}
\subjclass[2000]{53B20, 35B60}

\begin{abstract}
The strong unique continuation property for Einstein metrics can be
concluded from the well-known fact that Einstein metrics are analytic
in geodesic normal co\"ordinates. Here we give a different proof of 
that result using a Carleman inequality (and thus circumventing
the use of analyticity). Hence the method is robust under certain
non-analytic perturbations. As an example, we also show the strong
unique continuation property for the Riemannian Einstein-scalar-field
system with cosmological constant.

\end{abstract}
\maketitle

\section{Introduction}

An Einstein metric $g$ on a Riemannian manifold $M$ satisfies the
following system of equations
\begin{equation}\label{Einstein}
R_{ij}=\lambda g_{ij}
\end{equation}
where $R_{ij}$ is the Ricci curvature of the given metric $g_{ij}$
and the Einstein constant $\lambda$ is a fixed real number. The system
is essentially elliptic (a fact made most abundantly clear in harmonic
co\"ordinates), and in fact a solution can be seen as real-analytic in
harmonic or geodesic normal co\"ordinates \cite{Besse, DK}. It is
simple to see that then the strong unique continuation 
property\footnote{The strong unique continuation property for a system 
of equations can be stated roughly as: if $u$ and $v$ are solutions and
$p$ a point such that $u(p) = v(p)$ and $\partial^\alpha u(p) =
\partial^\alpha v(p)$ for any multi-index $\alpha$ (in other words,
their Taylor expansions agree at the point $p$), then $u = v$
everywhere.  See Theorem \ref{maintheorem} for precise conditions used
in this paper.} must hold for Einstein manifolds. 

Here we offer a different proof of the strong unique continuation
property using Carleman estimates. The Carleman inequality has a long
history in connection with uniqueness theorems (see \cite{Sogge} for a
selective history; also see \cite{Biquard, Kazdan, Taubes} for
examples) for elliptic systems. Recently it has also found use for
uniqueness properties of an ill-posed characteristic hyperbolic system
\cite{Alexakis, AIK}. The main advantage of an approach based on
Carleman estimates is that it relaxes the analyticity requirement.
Indeed, our proof is based on an estimate of Sogge's \cite{Sogge} that
allows even rough coefficients in the equation. To illustrate the
power of this method, we also prove the strong unique continuation
property for the Einstein equation coupled to a scalar field with an
potential that is $C^{3,1}$. 

To apply Carleman estimates, we interpret the Einstein(-scalar-field)
system as an elliptic system for the Riemann curvature tensor. By
the second Bianchi identities, we have that schematically
\[ dR = 0~;\quad \mathop{div} R = dRic~,\]
where the derivative of the Ricci tensor is zero in the pure Einstein
case. From the divergence-curl system we can derive the schematic equation
\[ \triangle_g R = R*R + \nabla^2_gRic~,\]
where $R*R$ denotes some fixed tensorial contractions between the Riemann
curvature tensor and itself, and especially is quadratic in $R$; and
$\nabla_g$ is the Levi-Civita connection of $g$.
The main idea is then to show that this system has unique continuation
properties: in other words, let $g$ and $\bar{g}$ be two metrics
solving our system, we wish to show that their corresponding Riemann
tensors $R, \bar{R}$ agree in a neighborhood. Subtracting the
\emph{equations}, we discover that, schematically, we need to consider
\[ \triangle_g (R - \bar{R}) =
-(\triangle_g-\triangle_{\bar{g}})\bar{R} + (R-\bar{R})*R +
\bar{R}*(R-\bar{R}) + \nabla_g^2(Ric - \bar{Ric}) +
(\nabla_g^2-\nabla_{\bar{g}}^2)\bar{Ric}~.\]
Here we immediately see two obstructions to directly applying the
Carleman estimates to obtain strong unique continuation, both of which
have to do with the second derivative terms. 

On the one hand, there is
the term $\nabla_g^2(Ric - \bar{Ric})$, which is the same order of
derivatives as the left hand side of the equation. A principle behind
the use of Carleman estimates is that to obtain unique continuation the
equations must be \emph{order reducing} (the Carleman inequality
itself estimates lower order derivatives by the top order derivatives
with a small constant. An order reducing equation estimates the top
order derivative by lower order derivatives with constant 1. Combining
the two gives uniqueness). For the pure Einstein case, the Ricci
tensor has zero covariant derivative, so this obstruction is
side-stepped. In the model case with the scalar field (this is also
true for more general matter fields such as Maxwell or Yang-Mills),
the derivatives of Ricci can be estimated by the derivatives of the
scalar field. As long as the matter field equations are minimally coupled
(which means that the geometry only appears in the form of covariant
differentiation and metric contraction), we can hope that prolongation
of the field equations leads to estimable quantities for the higher
derivatives. To be more precise: for the Einstein-scalar equations, we
have
\[ Ric = \partial\phi \otimes\partial\phi + V(\phi) g \]
where $\phi$ satisfies a scalar equation, and 
\[ \triangle_g\phi = V'(\phi) \]
where the potential $V$ satisfies some regularity conditions.
Therefore to control $\nabla_g^2(Ric - \bar{Ric})$, the worst term
comes from $\nabla_g^3(\phi - \bar{\phi})$, which we can obtain by
prolonging the scalar equation with two derivatives, whilst
sacrificing one derivative on Riemann curvature:
\[ \triangle_g \nabla^2_g\phi = \nabla^2_gV'(\phi) +
\nabla(R\cdot\nabla\phi) ~.\]
So in the end we obtain a system of equations for $\triangle_g\phi,
\triangle_g\nabla_g\phi, \triangle_g\nabla^2_g\phi$, and
$\triangle_gR$, which is order-reducing when all terms are considered.
It is for this system that we consider uniqueness. 

The second obstruction arises from the difference in connections,
which appears as $\nabla_g-\nabla_{\bar{g}}$ or $\nabla^2_g -
\nabla^2_{\bar{g}}$ in the subtracted equations. This obstruction is, in a large part, due to the quasi-linear nature of the Einstein equations. This gives a significant complication to our method when compared to purely semi-linear estimates; see \cite{Kazdan, Taubes}. To control these
terms, we adapt an idea from Alexakis \cite{Alexakis, AIK} to directly
control the connection coefficients. As explained in Section
\ref{PDEODESystemSection} below, we consider a radially transported orthonormal frame 
over a geodesic normal co\"ordinate system. This way we obtain a system 
of first order ordinary differential equations on the co\"ordinate 
coefficients of the frame vectors, and on the connection coefficients. 
Insofar as ODEs are concerned, the analogue for Carleman estimates is 
almost trivial. 

In summary, the unique continuation problem for the Einstein-scalar-field
system is reduced to a mixed partial-differential- and
ordinary-differential-equation system for the \emph{differences}
between two solutions (e.g.
$\delta\phi = \phi - \bar{\phi}, \delta \nabla\phi = \partial\phi -
\partial\bar{\phi}, \delta R = R - \bar{R}, \ldots$). By imposing 
the condition that the data for this system vanishes to infinite 
order at a point, we can apply Carleman estimates to the system and
obtain local uniqueness of the solution. 

The authors would like to note that Biquard \cite{Biquard} also
obtained a proof of strong uniqueness for the case of the pure 
Einstein metric using Carleman inequalities. Our approaches are,
however, fundamentally different as Biquard uses a system of elliptic
equations on the metric in local co\"ordinates, and on the second
fundamental form for a foliation of the ball by geodesic distance to the
origin, while we consider the tensorial elliptic system on the
curvature level. 

\section{Carleman Estimates and Uniqueness}

In this section we study the strong unique continuation for a 
coupled PDE-ODE system as a model problem. As explained in the
introduction, the motivation for the model problem lies in the need to
estimate connection coefficients \`a la \cite{Alexakis} and
\cite{AIK}. The reduction to the model problem will be explained in
more detail in Section \ref{PDEODESystemSection}. To show the strong
unique continuation property we shall need two
Carleman type estimates. The first estimate, due to Sogge \cite{Sogge}, is
an $L^p$-Carleman inequality adapted to show strong
unique continuation for elliptic systems with rough coefficients. It
will be used to control the PDE part of the system. A second, more
trivial, estimate used to control the ODE part of the system will be
shown below. 

We now work on $\mathbb{R}^n$, $n>2$, with the standard metric. Let
$\rh = r(1-r^\de)$ where $0 < \de < \frac{2}{n}$ is a small
constant. We fix $p = \frac{2n}{n+2}$ and $q = \frac{2n}{n-2}$.
The following easy fact will be crucial in the proof of strong unique
continuation:
\[ p < 2 < q\] 
since it allows one to use H\"older's inequality to get a small
constant by shrinking the neighborhood around the origin.
For any $\la$ such that 
\begin{equation}\label{lambdacondition}
|\la -(k +\frac{n-2}{2})| \geq \frac{1}{2}
\end{equation}
holds for every $k=1,2,...$, we have the following Carleman type 
inequality:
\begin{lemma}[Sogge \cite{Sogge}]\label{SoggeCarleman}
Assuming $u$ is a smooth function compactly supported in $B_{R}
\setminus\{0\}$ for some sufficiently small $R$, then there 
is a constant $C$ independent of 
$\lambda$, such that for all $\lambda$ sufficiently large and
satisfying \eqref{lambdacondition}
\begin{equation}\label{Carleman1}
\| \rh ^{-\lambda} u\|_{L^q(B_R)} + \lambda ^{\frac{1}{n}}\|\rh
^{-\lambda}r^{-1 + \frac{\de}{2}}\nabla u\|_{L^q(B_R)} \leq C
\|\rh^{-\la} \triangle_g u\|_{L^p(B_R)}
\end{equation}
\end{lemma}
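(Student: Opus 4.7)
The plan is to convert \eqref{Carleman1} into a classical harmonic analysis statement on a cylinder. First I would set $v = \rh^{-\la}u$ so that the two norms on the left become unweighted, and the right-hand side involves the conjugated operator $\rh^{-\la}\triangle_g\rh^\la$. Writing $\mathbb{R}^n\setminus\{0\}$ in polar form $(r,\omega)\in(0,\infty)\times\S$ and introducing the logarithmic substitution $s=-\log r$, the flat Laplacian takes the form $r^{-2}(\p_s^2 - (n-2)\p_s + \triangle_\S)$ on the cylinder $\mathbb{R}_s\times\S$. After the further absorption of a half-density factor $r^{-(n-2)/2}$ (which renders the drift term self-adjoint) and conjugation by $\rh^{\la}\approx e^{-\la s}$, the operator becomes, up to a perturbation from the $r^\de$ correction in $\rh$, the model operator $(\p_s+\la)^2 + \triangle_\S - \frac{(n-2)^2}{4}$.

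Second, decompose $v$ in spherical harmonics, writing $v=\sum_k v_k(s)Y_k(\omega)$ with $-\triangle_\S Y_k = k(k+n-2)Y_k$. The problem reduces on each mode to a second-order ODE whose symbol is $(\p_s+\la)^2 - (k+\frac{n-2}{2})^2$, with roots $-\la\pm(k+\frac{n-2}{2})$. The hypothesis \eqref{lambdacondition} keeps $\la$ at distance at least $\frac{1}{2}$ from every critical value $k+\frac{n-2}{2}$, hence from the modal spectrum, so each modal operator is invertible with operator norm uniformly bounded in $k$; the $r^\de$ perturbation is absorbed perturbatively once $R$ is small enough.

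The crucial analytic input, third, is a uniform $L^p\to L^q$ Sobolev-type resolvent estimate in the spirit of Kenig-Ruiz-Sogge, $\|w\|_{L^q(\mathbb{R}^n)}\leq C\|(\triangle+z)w\|_{L^p(\mathbb{R}^n)}$ with $C$ independent of $z\in\mathbb{C}$. A cylinder analogue, applied mode by mode and reassembled by Littlewood-Paley (where $p<2<q$ is essential), yields the desired $L^q$ bound of $\rh^{-\la}u$ in terms of $\|\rh^{-\la}\triangle_g u\|_{L^p}$. For the gradient term I would commute $\nab$ through the conjugation and perform a Caccioppoli-type integration by parts: the weight $r^{-1+\de/2}$ exactly compensates the singular contribution of $\nab \rh^{-\la}$, while the factor $\la^{1/n}$ emerges by interpolating the just-obtained $L^q$ bound on $v$ against a direct gradient estimate on the individual modes, where the gain of $1/n$ reflects the Sobolev gap $\tfrac{1}{p}-\tfrac{1}{q}=\tfrac{2}{n}$.

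The hard part will be the uniform-in-$\la$, uniform-in-$k$ resolvent bound together with the spherical harmonic reassembly in $L^q$, which is genuinely Sogge's refinement over the classical Jerison-Kenig estimate. The choice $\rh = r(1-r^\de)$ with $\de<\frac{2}{n}$ plays a dual role here: it perturbs the spectrum of the conjugated operator off the lattice of critical values in a controlled way, and it produces exactly the surplus $r^\de$ of decay needed, via H\"older's inequality on the exponent gap $\tfrac{1}{p}-\tfrac{1}{q}=\tfrac{2}{n}$, to absorb both the perturbative errors from conjugation and the gradient term with the stated power of $\la$.
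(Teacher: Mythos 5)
The paper does not prove Lemma~\ref{SoggeCarleman} at all: it is quoted verbatim from Sogge's 1990 paper, so there is no in-text proof to compare your proposal against. What can be assessed is whether your sketch matches the argument that Sogge actually gives, and in broad outline it does: conjugation by the weight, polar co\"ordinates with the substitution $s=-\log r$ and the half-density twist $r^{-(n-2)/2}$, spherical harmonic decomposition into modal ODEs with symbol $(\p_s+\la)^2-(k+\tfrac{n-2}{2})^2$, uniform $L^p\to L^q$ resolvent bounds in the Kenig--Ruiz--Sogge spirit (reassembled across modes by a Littlewood--Paley argument where $p<2<q$ is used), and the gradient estimate carrying the sharp factor $\la^{1/n}$. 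This is genuinely the architecture of Sogge's proof, and the emphasis on the exponents $p=\tfrac{2n}{n+2}$, $q=\tfrac{2n}{n-2}$ and the gap $\tfrac1p-\tfrac1q=\tfrac2n$ is exactly the point where his theorem improves over the $L^2$-based Jerison--Kenig estimate.

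Two cautions on the details you wave at. First, the $r^\de$ correction in $\rh=r(1-r^\de)$ is not what keeps $\la$ off the lattice $\{k+\tfrac{n-2}{2}\}$; that is entirely the job of \eqref{lambdacondition}. Its actual role is to make the weight (strictly) pseudo-convex, which is what converts a borderline, non-quantitative modal estimate into one with a uniform constant, and which supplies the $r^{-1+\de/2}$ factor on the gradient term. The H\"older gain coming from $p<2<q$ is how the lemma is \emph{used} in Proposition~\ref{modelproblem}, not a mechanism inside the proof of the lemma itself; you have conflated the two roles. Second, obtaining the $\la^{1/n}$ prefactor on the gradient term is genuinely delicate in Sogge's paper and is not a soft interpolation between an $L^q$ bound and a modal gradient estimate; it rests on the same oscillatory-integral/spectral-projector technology that gives the $L^p\to L^q$ resolvent bound in the first place. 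As a high-level road map your sketch is faithful, but the two items above are exactly where a careful write-up would need to do real work rather than appeal to analogy.
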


\begin{remark} Observe that the previous estimate holds for a
sequence of $\lambda$'s unbounded in $\mathbb{R}_+$. So we can 
let $\lambda$ tend to infinity, which gives us the 
``small constant'' necessary for the strong unique continuation.
\end{remark}

In the following, $Y$ denotes the smooth radial vector field 
$r\partial_r$.

\begin{lemma}\label{BabyCarleman}
For any $\delta$
fixed sufficiently small (depending on the dimension $n$), there 
exists $R_0 = R_0(\delta)$ and $C = C(n,R_0,\delta)$, such that for
any $\lambda > n$ and $R<R_0$, if $u$ is a smooth function compactly
supported in $B_R$ which vanishes to infinite order at the origin, 
\begin{equation}\label{Carleman2}
\| \rh ^{-\lambda} u\|_{L^2(B_R)}  \leq
\frac{C}{\lambda} \|\rh^{-\la} Y(u)\|_{L^2(B_R)}
\end{equation}
\end{lemma}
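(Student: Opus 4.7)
The plan is to conjugate out the weight and then test the resulting first-order operator directly against $v$ itself, using a single radial integration by parts.

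First I would set $v := \rh^{-\la} u$. A short computation using $Y\rh = r\bigl[1-(1+\de)r^\de\bigr]$ gives
\[
\rh^{-\la} Y u \;=\; Y v \,+\, \la\, f(r)\, v, \qquad f(r) \;:=\; \frac{Y\rh}{\rh} \;=\; \frac{1-(1+\de)r^\de}{1-r^\de},
\]
so that $f(r) \to 1$ as $r \to 0$; in particular, by choosing $R_0 = R_0(\de)$ small enough, we may ensure $f(r) \geq 1-\epsilon$ uniformly on $B_{R_0}$ with any preassigned $\epsilon$ (I will take $\epsilon < 1/4$). The claim is then equivalent to
\[
\|v\|_{L^2(B_R)} \;\leq\; \frac{C}{\la}\, \bigl\|Yv + \la f v\bigr\|_{L^2(B_R)}.
\]

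Next I would pair $Yv + \la f v$ with $v$ in $L^2$ and integrate the first-order contribution by parts in the radial direction. In polar coordinates $dx = r^{n-1}\,dr\,d\sigma$,
\[
\int_{B_R} v\, Yv\, dx \;=\; \tfrac{1}{2}\int r^n\, \p_r(v^2)\, dr\, d\sigma \;=\; -\tfrac{n}{2}\,\|v\|_{L^2}^2,
\]
with boundary terms vanishing because $v$ is compactly supported inside $B_R$ and, since $u$ vanishes to infinite order at the origin, so does $v$ (the polynomial singularity of $\rh^{-\la}$ is overwhelmed). Using $f \geq 1-\epsilon$ on $B_{R_0} \supset B_R$, this yields
\[
\int_{B_R} v\,(Yv + \la f v)\, dx \;=\; -\tfrac{n}{2}\|v\|^2 + \la\!\int_{B_R}\! f v^2\, dx \;\geq\; \bigl[\la(1-\epsilon) - \tfrac{n}{2}\bigr]\|v\|^2.
\]

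For any $\la > n$ together with $\epsilon < 1/4$ one checks $\la(1-\epsilon) - n/2 \geq \la/4$. Combined with the Cauchy--Schwarz bound on the left,
\[
\frac{\la}{4}\|v\|^2 \;\leq\; \|v\|\,\bigl\|Yv + \la f v\bigr\|,
\]
which, after dividing through by $\|v\|$ (the case $v \equiv 0$ being trivial), yields the desired inequality with $C = 4$. The main obstacle, and the source of the threshold $\la > n$, is precisely this bookkeeping: one must ensure that the positive zeroth-order term $\la\!\int\! f v^2$ strictly dominates the $-\tfrac{n}{2}\|v\|^2$ produced by the radial integration by parts, uniformly in $\la$; once the geometry of $f$ near the origin is understood, the balance of these two terms is exactly what fixes the threshold.
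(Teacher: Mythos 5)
Your proof is correct and is essentially the paper's argument in conjugated form: both hinge on a single radial integration by parts (yours producing the $-\tfrac{n}{2}\|v\|^2$ from $\int r^n\p_r(v^2)$, the paper's producing the analogous term by differentiating $r^{-2\la+n}$ directly), both control the $\de$-dependent error ($f\geq 1-\epsilon$ versus $\tfrac{r^\de}{1-r^\de}\leq 1$ and $\de$ small) by shrinking $R_0$, and both finish with Cauchy--Schwarz and division by $\|v\|$. The conjugation $v=\rh^{-\la}u$ is a clean repackaging that makes the symmetric/antisymmetric split explicit, but it does not change the substance of the computation.
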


\begin{proof} We use the standard measure $dx_1 \wedge \cdots \wedge
dx_n$. The measure $d\sigma$ is the standard measure on unit
sphere $\S$.
\begin{align*}
\| \rh ^{-\lambda} u\|^2_{L^2(B_R)} &= c_n \int_0^R \int_{\S}
r^{-2\la}(1-r^\de)^{-2\la} u^2 r^{n-1}dr d\sigma\\
&= \frac{c_n}{-2 \la +n} \int_0^R \int_{\S} (1-r^\de)^{-2\la} u^2
d(r^{-2\la + n}) d\sigma\ \\
&=\frac{2c_n}{2 \la -n} \int_0^R \int_{\S} (1-r^\de)^{-2\la} u
\cdot \p_r u \cdot r^{-2\la +n}dr d\sigma \\
&\quad + \frac{2c_n \la\de}{2 \la -n} \int_0^R \int_{\S}
\frac{r^\de}{1-r^\de} r^{-2\la} (1-r^\de)^{-2\la} u^2 \cdot
r^{n-1}dr d\sigma
\end{align*}
We can assume $\de$ to be fixed $< c_n / 2^{10}$, and $R_0$ fixed 
so that the factor $\frac{r^\de}{1-r^\de} \leq 1$ for all $r<R<R_0$. 
Then since $\lambda > n$, the second term on the right hand side 
can be absorbed to the left hand side, and we have
\begin{align*}
\| \rh ^{-\lambda} u\|^2_{L^2(B_R)} &\leq \frac{C}{\la} \int_0^R
\int_{\S} (1-r^\de)^{-2\la} u \cdot \p_r u \cdot r^{-2\la +n}dr
d\sigma\\
&=\frac{C}{\la} \int_0^R \int_{\S} \rh^{-\la} u \cdot \rh ^ {-\la}
r\p_r u \cdot r^{n-1}dr d\sigma\\
&=\frac{C}{\la} \int_0^R \int_{\S} \rh^{-\la} u \cdot \rh ^ {-\la}
Y(u) \cdot r^{n-1}dr d\sigma\\
&\leq \frac{C}{\lambda}\| \rh ^{-\lambda} u\|_{L^2(B_R)} \cdot \|
\rh ^{-\lambda} Y(u)\|_{L^2(B_R)}
\end{align*}
This proves the lemma.
\end{proof}

The previous two estimates allows one to use the standard Carleman  
techinques to show the strong unique continuation for the following
model problem:

\begin{proposition}\label{modelproblem}
Given a smooth solution $(u,v)$ of the following PDE-ODE system
\begin{equation}\label{model}
\left\{ \begin{array}{rl}
    \triangle_g u &= u + \nab u + v\\
    Y(v) &= v + u + \nab u
        \end{array}\right.
\end{equation}
 If $(u,v)$ vanishes to infinite order around a point $p \in M$, then
 there is a small neighborhood $U$ of $p$ such that
 \begin{equation}\label{zero}
 u=v=0 \qquad \text{on} \quad U.
 \end{equation}
\end{proposition}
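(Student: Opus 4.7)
The plan is to apply Lemma \ref{SoggeCarleman} to $u$ and Lemma \ref{BabyCarleman} to $v$, sum the two inequalities, then use the equations \eqref{model} to rewrite the right-hand sides in terms of $u$, $\nab u$, $v$ and absorb them back into the left via H\"older. I would work in geodesic normal co\"ordinates centered at $p$, fix a cutoff $\chi \in C_c^\infty(B_{R/2})$ with $\chi \equiv 1$ on $B_{R/4}$, and apply the two estimates to $\chi u$ and $\chi v$. (For Sogge one first truncates away from the origin by a secondary cutoff supported in $\{r>\epsilon\}$ and passes to the limit $\epsilon\to 0$; this works because the infinite-order vanishing of $u$ makes all $\epsilon$-boundary contributions tend to zero for any fixed $\la$.)

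Substituting $\triangle_g u = u + \nab u + v$ and $Y(v) = v + u + \nab u$ produces on the right-hand side the interior terms $\|\rh^{-\la}(u+\nab u+v)\|_{L^p}$ from Sogge and $\|\rh^{-\la}(u+\nab u+v)\|_{L^2}$ (scaled by $1/\la$) from Lemma \ref{BabyCarleman}, plus commutator terms of the form $\triangle_g\chi\cdot u$, $\nab\chi\cdot\nab u$, and $Y(\chi)v$ supported in the annulus $B_{R/2}\setminus B_{R/4}$. On that annulus $\rh^{-\la}\lesssim (R/4)^{-\la}$, while on $B_{R/8}$ we have $\rh^{-\la}\gtrsim(R/8)^{-\la}$, so the commutator contributions are suppressed by a factor $2^{-\la}$ relative to any norm on $B_{R/8}$. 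The interior terms are then converted into the three quantities appearing on the combined left --- $\|\rh^{-\la} u\|_{L^q}$, $\la^{1/n}\|\rh^{-\la} r^{-1+\de/2}\nab u\|_{L^q}$, and $\|\rh^{-\la} v\|_{L^2}$ --- via H\"older's inequality. Since $p<2<q$, each such conversion costs a factor of $R^\a$ for some $\a>0$; the two critical cases are $\|\rh^{-\la}\nab u\|_{L^p}\lesssim R^{3-\de/2}\|\rh^{-\la} r^{-1+\de/2}\nab u\|_{L^q}$ (to be absorbed into the $\la^{1/n}$-weighted gradient term on the left) and $\|\rh^{-\la} v\|_{L^p}\lesssim R\|\rh^{-\la} v\|_{L^2}$.

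The main obstacle is the bookkeeping required to order the parameter choices: one must balance the $L^p/L^q$ norms of Sogge against the $L^2$ norms of Lemma \ref{BabyCarleman} without either $R$ or $\la$ running the wrong way. I would first fix $R$ small (depending only on $n$ and $\de$) so that every $R^\a$ absorption constant is at most, say, $1/16$; then take $\la$ large along the admissible sequence from \eqref{lambdacondition}, which makes the $1/\la$ factors from the ODE estimate small and makes $R^{3-\de/2}/\la^{1/n}$ small, closing the gradient absorption. After all interior contributions are absorbed, one is left with an inequality of the form
\[ \|\rh^{-\la} u\|_{L^q(B_{R/8})} + \|\rh^{-\la} v\|_{L^2(B_{R/8})} \leq C\,2^{-\la}\,\|(u,\nab u,v)\|_{L^\infty(B_R)}, \]
with $C$ independent of $\la$. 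Bounding the left-hand side below by $\rh(R/8)^{-\la}\sim (R/8)^{-\la}$ times the unweighted norms on $B_{R/8}$ and letting $\la\to\infty$ along the admissible sequence, the right-hand side becomes vanishingly small relative to the left, forcing $u\equiv 0$ and $v\equiv 0$ on $B_{R/8}$; this ball is the desired neighborhood $U$.
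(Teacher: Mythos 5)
Your argument is essentially the paper's proof: you apply Lemma \ref{SoggeCarleman} to a cut-off version of $u$ and Lemma \ref{BabyCarleman} to a cut-off $v$, truncate near the origin and use infinite-order vanishing to kill those boundary terms, substitute the equations, convert $L^p$ and $L^2$ norms via H\"older using $p<2<q$ to gain powers of $R$, absorb the interior contributions by first fixing $R$ small and then taking admissible $\la\to\infty$, and finally exploit the strictly smaller Carleman weight in the outer annulus versus the inner ball to force vanishing. Aside from some cosmetic choices (the displayed post-absorption inequality should carry a factor like $(R/4)^{-\la}$ rather than $2^{-\la}$ until you divide by $\rh(R/8)^{-\la}$; the paper instead rearranges to $\|(\Rh/\rh)^\la u\|_{L^q}\le C'$ and invokes dominated convergence), the parameter ordering, the role of the $\la^{1/n}$ weight, and the structure of the argument all match the paper's proof.
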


\begin{proof} Let $R$ be sufficiently small so that Lemmas
\ref{SoggeCarleman} and \ref{BabyCarleman} both apply.  
Let $\phi$ be a cut-off function supported in the
unit ball $B_1$ such that $\phi|_{B_R} = 1$. Let
\begin{equation}\label{cutoff}
    \chi_k(x) = 1 - \phi(2^k x)~.
\end{equation}

We then define
\begin{equation}\label{ukvk}
\left\{ \begin{array}{rl}
    u_k (x) &= \chi_k(x) \cdot \phi(x) \cdot u(x)\\
    v_\phi(x) &= \phi(x) \cdot v(x)
        \end{array}\right.
\end{equation}

We can apply the Carleman estimate \eqref{SoggeCarleman} to 
$u_k$ in $B_1$ which gives:

\begin{align}\label{ucarleman}
\| \rh ^{-\lambda} u_k\|_{L^q(B_R)} &+ \lambda ^{\frac{1}{n}}\|\rh
^{-\lambda}r^{-1 + \frac{\de}{2}}\nabla u_k\|_{L^q(B_R)} \\ 
\nonumber & \leq
C (\|\rh^{-\la} \triangle_g u_k\|_{L^p(B_R)}+ \|\rh^{-\la}
\triangle_g (\phi \cdot u)\|_{L^p(^c B_R)})
\end{align}
where ${}^c B_R$ denotes the complement of $B_R$ in $B_1$. 
We use Leibnitz rule and the equation \eqref{model} to get
\begin{align*}
\|\rh^{-\la} \triangle_g u_k &\|_{L^p(B_R)} = \|\rh^{-\la} (\chi_k
\triangle_g u + 2 \nabla_g \chi_k \cdot \nabla_g u+
u\triangle_g \chi_k )\|_{L^p(B_R)}\\
&\leq \|\rh^{-\la} \chi_k \triangle_g u
\|_{L^p(B_R)} + R^{-\lambda}2^{(\lambda + 2)
k}\|\phi\|_{C^2(B_1)}\|u\|_{W^{1,p}(B_{2^{-k}})}
\end{align*}
where we used that $\nabla\chi_k$ is supported on $B_{2^{-k}}\setminus
B_{2^{-k}R}$. Next we note that if $u$ vanishes to infinite order at
the origin, then $\|u\|_{W^{1,p}(B_\rho)} = O(\rho^N)$ for any $N > 0$
as $\rho\searrow 0$, so for any fixed $R,\lambda$, as $k\nearrow\infty$
the second term above tends to 0. We denote this behaviour by
$o_k(1)$. Continuing 
\begin{align*}
\|\rh^{-\la} \triangle_g u_k &\|_{L^p(B_R)} \leq \|\rh^{-\la} \chi_k u \|_{L^p(B_R)} + \|\rh^{-\la} \chi_k
\nabla u \|_{L^p(B_R)} + \|\rh^{-\la} \chi_k v \|_{L^p(B_R)} + o_k(1)\\
&\stackrel{H\ddot{o}lder}{\leq} R^2\|\rh^{-\la} \chi_k u
\|_{L^q(B_R)} + R^2 \|\rh^{-\la} \chi_k \nabla u \|_{L^q(B_R)} +
R\|\rh^{-\la} \chi_k v \|_{L^2(B_R)}+o_k(1)
\end{align*}
Plug this in \eqref{ucarleman}, and for any fixed $\lambda, R$, let 
$k\nearrow\infty$, we have
\begin{align}\label{uestimate}
\| \rh ^{-\lambda} u_k\|_{L^q(B_R)} &+ \lambda ^{\frac{1}{n}}\|\rh
^{-\lambda}r^{-1 + \frac{\de}{2}}\nabla u_k\|_{L^q(B_R)} \\ 
\nonumber & \leq
C (R^2\|\rh^{-\la} \chi_k u
\|_{L^q(B_R)} + R^2 \|\rh^{-\la} \chi_k \nabla u \|_{L^q(B_R)} \\
\nonumber & \qquad + R\|\rh^{-\la} \chi_k v \|_{L^2(B_R)}+ \|\rh^{-\la} \triangle_g
(\phi \cdot u)\|_{L^p(^c B_R)})
\end{align}

Now to control $v$, we need to use the Carleman
estimate \eqref{Carleman2} and the equation \eqref{model} to
estimate $v_\phi(x)$:
\begin{align*}
\lambda \| \rh ^{-\lambda} v\|_{L^2(B_R)} & \leq
C(\|\rh^{-\la} Y(v)\|_{L^2(B_R)} + \|\rh^{-\la} Y(\phi
v)\|_{L^2(^cB_R)})\\
&\leq C(\|\rh^{-\la} v\|_{L^2(B_R)} + \|\rh^{-\la} u\|_{L^2(B_R)} \\
& \qquad +
\|\rh^{-\la} \nabla u\|_{L^2(B_R)}+ \|\rh^{-\la} Y(\phi
v)\|_{L^2(^cB_R)})
\end{align*}
Applying H\"older's inequality to the second and third terms, 
\begin{align}\label{vestimate}
\lambda \| \rh ^{-\lambda} v\|_{L^2(B_R)} & \leq C( \|\rh^{-\la}
v\|_{L^2(B_R)} \\
\nonumber & \quad + 
R\|\rh^{-\la} u\|_{L^q(B_R)}+ R\|\rh^{-\la} \nabla u\|_{L^q(B_R)}+
\|\rh^{-\la} Y(\phi v)\|_{L^2(^cB_R)})
\end{align}
Now, puttin \eqref{uestimate} and \eqref{vestimate} together, we can
assume that $\lambda$ is initially chosen to be large enough, and $R,
\delta$ are chosen initially small enough such that the first three
terms on the right hand side of \eqref{uestimate} and the first three
terms on the right hand side of \eqref{vestimate} can all be absorbed
to the left hand side. And so we have
\begin{align*}
\| \rh ^{-\lambda} u\|_{L^q(B_R)} + &\lambda ^{\frac{1}{n}}\|\rh
^{-\lambda}r^{-1 + \frac{\de}{2}}\nabla u\|_{L^q(B_R)} +
\lambda \| \rh ^{-\lambda} v\|_{L^2(B_R)} \\&\leq C
(\|\rh^{-\la} \triangle_g (\phi u)\|_{L^p(^c B_R)}+
\|\rh^{-\la} Y(\phi v)\|_{L^2(^cB_R)})\\
&\leq C \Rh^{-\la}(\|\triangle_g (\phi u)\|_{L^p(^c B_R)}+
\|Y(\phi v)\|_{L^2(^cB_R)})\\
&\leq C' \Rh^{-\la}
\end{align*}
In particular, it shows
\[ \|(\frac{\Rh}{\rh})^\lambda u \|_{L^q(B_R)} \leq C'~,\quad
\|(\frac{\Rh}{\rh})^\lambda v \|_{L^q(B_R)} \leq \frac{C'}{\lambda} \]
Now let $\lambda \to \infty$ and notice that $\Rh/\rh > 1$, so by Lebesgue 
dominant convergence theorem, $u=v=0$ in $B_R$.
\end{proof}

\begin{remark}\label{technicalremark}
Observe that the same argument also applies if instead of the system
\eqref{model}, we have the differential inequality
\begin{equation}\label{model2}
\left\{ \begin{array}{rl}
    |\triangle_g u| & \leq C( |u| + |\nab u| + |v|)\\
    |Y(v)| &\leq C(|v| + |u| + |\nab u|)
        \end{array}\right.
\end{equation}
instead. This form is more convenient in applications.
\end{remark}

\section{PDE-ODE System}\label{PDEODESystemSection}

Fix a point $ p_0 \in M$ and an orthonormal basis
$\{e_i(p_0)\}$ on $p_0$. By parallel transporting this frame
along geodesics emanating from $p_0$, we get an orthonormal frame
$\{e_i\}$ near $p_0$. Let the smooth vector field
 $$Y= r \p_r $$ 
be defined on a normal neighborhood near $p_0$, where $r$ is the
geodesic distance to $p_0$. By the
construction of $\{e_i\}$, we have
\begin{equation}\label{nabY}
 \nab_Y e_i = 0~, \quad \nab_Y Y = Y~.
\end{equation}
The connection coefficients relative to this frame are expressed as
\[ \G_{i j}^{k} = g(\nab_{e_i} e_j, e_k)~, \qquad \G_{i Y}^{j} =
g(\nab_{e_i} Y, e_j)~. \]
We fix a co\"ordinate system $(x_\a)$, then
\[Y = Y^\a \p_\a~, \quad e_i = e_i^\a \p_\a~.\]
Since
\begin{align*}
 \G_{iY}^k e_k^\a \p_\a&=\G_{iY}^k e_k = \nab_{e_i} Y = [e_i, Y] \\
&= (\p_\b Y_\a) e_i^\b \p_\a - (Y e_i^\a)\p_\a
\end{align*}
we have
\begin{equation}\label{Yeia}
         Y e_i^\a = (\p_\b Y^\a) e_i^\b - \G_{iY}^{k}e_k^\a~.
\end{equation}
We now compute the derivatives of the connection coefficients along
$Y$
\begin{align*}
Y\G_{i j}^{k} &= Y g(\nab_{e_i} e_j, e_k) = g(\nab_Y \nab_{e_i} e_j,
e_k)\\
&=  g(R(Y,e_i)e_j,e_k)+g(\nab_{e_i} \nab_Y e_j, e_k)+g(\nab_{\nab_Y
{e_i}-\nab_{e_i} Y}e_j, e_k)\\
&=R_{Yijk}-\G_{iY}^{p}\G_{pj}^{k}\\
Y\G_{i Y}^{k} &= Y g(\nab_{e_i} Y, e_k) = g(\nab_Y \nab_{e_i} Y,
e_k)\\
&= g(R(Y,e_i)Y,e_k)+g(\nab_{e_i} \nab_Y Y, e_k)+g(\nab_{\nab_Y
{e_i}-\nab_{e_i} Y}Y, e_k)\\
&= R_{YiYk}+g(\nab_{e_i} Y, e_k)-g(\nab_{\nab_{e_i} Y}Y, e_k)\\
&= R_{YiYk}+\G_{i Y}^{k}- \G_{i Y}^{p}\G_{p Y}^{k}
\end{align*}
together with (\ref{Yeia})
\begin{equation}\label{gamma1}
\left\{ \begin{array}{rl}
    Y (e_i^\a) &= (\p_\b Y^\a) e_i^\b - \G_{iY}^{k}   e_k^\a\\
         Y(\G_{i j}^{k}) &= R_{Yijk}-\G_{iY}^{p}   \G_{pj}^{k}\\
       Y(\G_{i Y}^{j}) &= R_{YiYj}+\G_{i Y}^{j}- \G_{i Y}^{p}   \G_{p
Y}^{j}
        \end{array}\right.\end{equation}
Differentiating this system by co\"ordinate derivatives, we have
(by convention, unless bracketed, the derivatives only acts on the 
multiplicand immediately following it)
\begin{equation}\label{dgamma1}
\left\{ \begin{array}{rl}
    Y (\p_\b e_i^\a) &= -\p_\b Y^\g   \p_\g e_i^\a + \p_\b\p_\g Y^\a
e_i^\g + \p_\g Y^\a   \p_\b e_i^\g - \p_\b \G_{iY}^{k}   e_k^\a
-\G_{iY}^{k}  \p_\b e_k^\a\\
         Y (\p_\a \G_{i j}^{k}) &= -\p_\a Y^\b   \p_\b \G_{i j}^{k} +
\p_\a R_{Yijk}- \p_\a\G_{i Y}^{p}   \G_{p j}^{k} - \G_{iY}^{p}
\p_\a\G_{p j}^{k}\\
       Y (\p_\a \G_{i Y}^{j}) &= -\p_\a Y^\b   \p_\b \G_{i Y}^{j} +
\p_\a R_{YiYj}+ \p_\a \G_{i Y}^{j} - \p_\a\G_{i Y}^{p}   \G_{p Y}^{j}
- \G_{iY}^{p}   \p_\a\G_{p Y}^{j}
        \end{array}\right.\end{equation}
For the curvature terms in \eqref{gamma1} and \eqref{dgamma1}, they
involve the vector field $Y$. We still need to untangle it into the
frame components. Let $\p_\a = e^i_\a   e_i$, so $e^i_\a   e^\b_i =
\delta_\a^\b$. So we have
\begin{align*}
 Y(e^i_\a)&=-e^i_\b e_\a^j Y(e_j^\b) =-e^i_\b e_\a^j(\p_\g Y^\b e_j^\g
- \G_{jY}^{k}   e_k^\b) =-\p_\a Y^\b   e^i_\b  +e_\a^j   \G_{jY}^{i}\\
 Y(\p_\b e^i_\a)&=[Y,\p_\b] e^i_\a+ \p_\b (Y(e^i_\a))\\
&=-\p_\b Y^\g   \p_\g e^i_\a -\p_\a \p_\b Y ^\g   e_\g^i -\p_\a
Y^\g   \p_\b e_\g^i + \p_\b e_\a^j    \G_{j Y}^{i} + e_\a^j
\p_\b \G_{j Y}^{i}
\end{align*}
Now we can untangle the curvature terms for \eqref{gamma1}
\begin{align*}
Y(\G_{i j}^{k}) &= R_{Yijk}-\G_{iY}^{p}   \G_{pj}^{k} =Y^\g e_\g^l
R_{lijk} -\G_{iY}^{p}   \G_{pj}^{k}\\
Y (\p_\a \G_{i j}^{k}) &= -\p_\a Y^\b   \p_\b \G_{i j}^{k} + \p_\a
R_{Yijk}- \p_\a\G_{i Y}^{p}   \G_{p j}^{k} - \G_{iY}^{p}   \p_\a\G_{p
j}^{k}\\
&=-\p_\a Y^\b   \p_\b \G_{i j}^{k} +\p_\a Y^\g e_\g^l
R_{lijk}+Y^\g \p_\a e_\g^l R_{lijk} \\&\quad + Y^\g e_\g^l \p_\a
R_{lijk}-\p_\a\G_{i Y}^{p}   \G_{p j}^{k} - \G_{iY}^{p}
\p_\a\G_{p j}^{k}\\
Y(\G_{i Y}^{j}) &= R_{YiYj}+\G_{i Y}^{j}- \G_{i Y}^{p}   \G_{p
Y}^{j}\\
&=Y^\a  Y^\b   e_\a^k   e_\b^l   R_{kilj}+\G_{i Y}^{j}- \G_{i
Y}^{p}   \G_{p Y}^{j}\\
Y (\p_\a \G_{i Y}^{j}) &= -\p_\a Y^\b   \p_\b \G_{i Y}^{j} + \p_\a
R_{YiYj}+ \p_\a \G_{i Y}^{j} - \p_\a\G_{i Y}^{p}   \G_{p Y}^{j} -
\G_{iY}^{p}   \p_\a\G_{p Y}^{j}\\
&= -\p_\a Y^\b   \p_\b \G_{i Y}^{j} + \p_\a \G_{i Y}^{j} - \p_\a\G_{i
Y}^{p}   \G_{p Y}^{j} - \G_{iY}^{p}   \p_\a\G_{p Y}^{j} + \p_\a (Y^\g
Y^\b) e_\g^k e_\b^l R_{kilj}\\
& \quad + Y^\g Y^\b
\p_\a e_\g^k e_\b^l R_{kilj} +Y^\g Y^\b e_\g^k \p_\a e_\b^l
R_{kilj} +Y^\g Y^\b e_\g^k e_\b^l \p_\a R_{kilj}
\end{align*}
We can put all these equations together to have the following system
of equations for connection coefficients:
\begin{equation}\label{eqGamma}
\left\{ \begin{array}{rl}
    Y(e_i^\a) &= \p_\b Y^\a e_i^\b - \G_{iY}^{k}   e_k^\a\\
    Y(e^i_\a) &= -\p_\a Y^\b   e^i_\b  +e_\a^j   \G_{jY}^{i}\\
    Y (\p_\b e_i^\a) &= -\p_\b Y^\g   \p_\g e_i^\a + \p_\b\p_\g Y^\a
e_i^\g + \p_\g Y^\a   \p_\b e_i^\g - \p_\b \G_{iY}^{k}   e_k^\a
-\G_{iY}^{k}  \p_\b e_k^\a\\
    Y(\p_\b e^i_\a)&=-\p_\b Y^\g   \p_\g e^i_\a -\p_\a \p_\b Y ^\g
e_\g^i -\p_\a Y^\g   \p_\b e_\g^i + \p_\b e_\a^j    \G_{j Y}^{i} +
e_\a^j   \p_\b \G_{j Y}^{i}\\
        Y(\G_{i j}^{k}) &= Y^\g e_\g^l R_{lijk} -\G_{iY}^{p}
\G_{pj}^{k}\\
        Y(\G_{i Y}^{j}) &= Y^\a  Y^\b   e_\a^k   e_\b^l
R_{kilj}+\G_{i Y}^{j}- \G_{i Y}^{p}   \G_{p Y}^{j}\\
    Y (\p_\a \G_{i j}^{k}) &= -\p_\a Y^\b   \p_\b \G_{i j}^{k} +\p_\a
Y^\g e_\g^l R_{lijk}+Y^\g \p_\a e_\g^l R_{lijk} \\
&\quad + Y^\g e_\g^l \p_\a R_{lijk}-\p_\a\G_{i Y}^{p}   \G_{p j}^{k} -
\G_{iY}^{p}   \p_\a\G_{p j}^{k}\\
    Y (\p_\a \G_{i Y}^{j}) &= -\p_\a Y^\b   \p_\b \G_{i Y}^{j} + \p_\a
\G_{i Y}^{j} - \p_\a\G_{i Y}^{p}   \G_{p Y}^{j} - \G_{iY}^{p}
\p_\a\G_{p Y}^{j}\\
& \quad + \p_\a (Y^\g Y^\b) e_\g^k e_\b^l R_{kilj}  + Y^\g Y^\b \p_\a
e_\g^k e_\b^l R_{kilj} \\
& \quad +Y^\g Y^\b e_\g^k \p_\a e_\b^l R_{kilj} +Y^\g Y^\b e_\g^k
e_\b^l \p_\a R_{kilj}
        \end{array}\right.
\end{equation}

Let $v$ to denote the vector value function which is the collection of
$e_i^\a$, $\p_\b
e_i^\a$, $\G_{i j}^{k}$, $\G_{i Y}^{j}$, $\p_\a \G_{i j}^{k}$ and
$\p_\a \G_{i Y}^{j}$, more precisely, we define
\begin{equation}
v = (e_i^\a, \p_\b e_i^\a, \G_{i j}^{k}, \G_{i Y}^{j}, \p_\a \G_{i
j}^{k}, \G_{i Y}^{k}).
\end{equation}
Schematically, the system \eqref{eqGamma} can be written as
\begin{equation}\label{eq1}
 Y(v) = A*v + A*v*v + A*v*R + A*v* \p R +A*v*v*R + A*v*v* \p R
\end{equation}
where $A$ denotes some algebraic expression involving $Y^\a, \p_\beta
Y^\a$ and $\p R$ denotes $\p_\a R_{kilj}$.

An Einstein-scalar field on a Riemannian manifold is a triplet
$(M,g,\phi)$ satisfying the following system of equations
\begin{equation}\label{einsteinn}
\left\{ \begin{array}{rl}
    R_{ij}-\frac{1}{2} g_{ij} R &=\nab_i \phi \nab_j \phi -\frac{1}{2}
g_{ij}(|\nab \phi|^2+2V(\phi)) - \lambda g_{ij}\\
    \triangle_g \phi &= V'(\phi)
        \end{array}\right.
\end{equation}
where $R_{ij}$ the Ricci curvature of the metric $g_{ij}$, $R$ is the
corresponding scalar curvature and $\phi$ is the coupled scalar field.
$\lambda$ is a real number that denotes the cosmological constant, and
the function $V:\mathbb{R}\to\mathbb{R}$ is the potential for the
scalar field.
We use $\nab$ to denote the Levi-Civita connection for $g$ and
$\triangle$ for the corresponding laplacian.
By taking the trace of the first equation in \eqref{einsteinn}, it can 
be simplified as follows:
\begin{equation}\label{einstein}
\left\{ \begin{array}{rl}
    R_{ij}&=\nab_i \phi \nab_j \phi + g_{ij}V(\phi) + g_{ij} \lambda\\
    \triangle_g \phi &= V'(\phi)
        \end{array}\right.
\end{equation}

To simplify the formulae we will use $*$
notation. The expression $A*B$ is a linear combination of tensors,
each formed by starting with $A \otimes B$, using the metric to
take any number of contractions. So the algorithm to get $A*B$ is
independent of the choices of tensors $A$ and $B$ of respective
types.

Recall the second Bianchi identity
\begin{equation}\label{bianchi}
\nab_i R_{jklm} +\nab_j R_{kilm} +\nab_k R_{ijlm}=0
\end{equation}
We trace the $i$ and $l$ indices, then we have the once contracted
Bianchi identity
\begin{align*}
\nab^i R_{jkim} &= -\nab_j R_{k}{}^i{}_{im} -\nab_k R^{i}{}_{jim}
=-\nab_j R_{km} + \nab_k R_{jm}\\
&=(\nab^2 \phi* \nab \phi)_{jkm} + (V''(\phi) \nab \phi)_{jkm}
\end{align*}
We now apply divergence operator on the second Bianchi identity
(\ref{bianchi})
and use the previous contracted version,  we have
\begin{align*}
 \nab^i \nab_i R_{jklm} &= - \nab^i \nab_j R_{kilm} - \nab^i \nab_k
R_{ijlm} \\
&= -[\nab^i, \nab_j]R_{kilm}-[\nab^i, \nab_k] R_{ijlm} - \nab_j \nab^i
R_{kilm} -\nab_k  \nab^i  R_{ijlm} \\
&= (R*R)_{jklm} +(\nab^2 \phi * \nab^2 \phi)_{jklm} +(\nab^3 \phi *
\nab \phi)_{jklm}\\
&\quad +  (V''(\phi)\nab^2 \phi)_{jklm} + (V'''(\phi)*\nab \phi*\nab
\phi)_{jklm}
\end{align*}
So we have the following equations for the full curvature tensor:
\begin{align}
\triangle_g R_{ijkl}& = (R*R)_{ijkl} +(\nab^2 \phi * \nab^2
\phi)_{ijkl} +(\nab^3 \phi * \nab \phi)_{ijkl}\\
&\quad +  (V''(\phi)\nab^2 \phi)_{ijkl} + (V'''(\phi)*\nab \phi*\nab
\phi)_{ijkl}
\end{align}
Now we deal with the scalar field part of \eqref{einstein}. We apply the 
covariant derivatives on the equations, then commute derivatives, it gives
\begin{align*}
V''(\phi) \nab_j \phi &= \nab_j \triangle_g \phi=\nab_j \nab^i \nab_i \phi\\
&=\nab^i \nab_j  \nab_i \phi - R_j{}^i{}_{ik}\nab^k\phi=\triangle_g \nab_j \phi - R_{jk}\nab^k\phi
\end{align*}
So we have
\begin{equation}\label{nabphi}
 \triangle_g \nab \phi = V''(\phi)*\nab \phi + R*\nab \phi
\end{equation}
We repeat this to get
\begin{equation}\label{nabnabphi}
 \triangle_g (\nab^2 \phi) = V''(\phi)*\nab^2 \phi +V'''(\phi)*\nab \phi *\nab \phi+\nab R*\nab \phi + R*\nab^2 \phi
\end{equation}
In summary, we have
\begin{equation}\label{field}
\left\{ \begin{array}{rl}
\triangle_g R_{ijkl}& = (R*R)_{ijkl} +(\nab^2 \phi * \nab^2
\phi)_{ijkl} +(\nab^3 \phi * \nab \phi)_{ijkl}\\
&\quad +  (V''(\phi)\nab^2 \phi)_{ijkl} + (V'''(\phi)*\nab \phi*\nab
\phi)_{ijkl}\\   
\triangle_g \nab \phi &= V''(\phi)*\nab \phi + R*\nab \phi\\
\triangle_g (\nab^2 \phi) &= V''(\phi)*\nab^2 \phi +V'''(\phi)*\nab \phi *\nab \phi+\nab R*\nab \phi + R*\nab^2 \phi
        \end{array}\right.
\end{equation}
The above is a tensor equation written in abstract index notation. To
convert the system to a scalar system of equations, we calculate the
frame components of both the left and right hand sides. In particular,
we let $u$ be vector valued function
\begin{equation}
u = (R_{ijkl}, \phi, \nabla_i\phi, \nabla^2_{ij}\phi)
\end{equation}
comprising the scalar $\phi$ and the frame components of 
$R$, $\nabla\phi$, and
$\nabla^2\phi$. The scalar equations for $u$ will then involve
connection coefficients where terms are expressed as derivatives of
the tensors $R$, $\nabla\phi$, and $\nabla^2\phi$. Then it is clear 
that, since the left hand side of \eqref{field} has at most two 
derivatives of $R$, $\nabla\phi$, and $\nabla^2\phi$, while the right 
hand side has at most one derivative, the equation for $u$ can be
written down schematically (noting that the equation for $\phi$ is
given in \eqref{einstein}) as
\begin{equation}\label{eq3}
 \triangle_g u = v*v*u + v*u + \p u * u + v*u*u + B*u + B*u*u 
\end{equation}
noting that while terms involving one derivative of the connection
coefficients appear, no terms involve two derivatives of the
connection coefficients, and so no derivatives of $v$ are involved;
$B$ here denotes coefficients involving $V'(\phi)$, $V''(\phi)$ and 
$V'''(\phi)$
(henceforth we require the potential $V:\mathbb{R}\to\mathbb{R}$ to
have regularity $C^{3,1}$). 

In view of \eqref{eq1}, we have the following closed PDE-ODE system
\begin{equation}\label{main}
 \left\{ \begin{array}{rl}
\triangle_g u &= B*(u + u*u) + v*u*(1 + u + v) + u*\p u\\
 Y(v) &= A*v*(1 + v + u + \p u + v*u + v*\p u)
        \end{array}\right.
\end{equation}
We now turn to the strong unique continuation for the Einstein-scalar field: 
\begin{theorem}\label{maintheorem}
 Let $U$ and $\bar{U}$ be smooth manifolds with dimension $n\geq 3$. Let $(g,\phi)$ and $(\gb,\phib)$ be solutions to the Einstein-scalar equation \eqref{einstein} on $U$ and $\bar{U}$ respectively. We assume that there exist points $p\in U$ and $\bar{p} \in \bar{U}$ such that the metric tensors, the covariant derivatives of arbitrary order of the matter fields, and the covariant derivatives of arbitrary order of the Riemann curvature tensors for the two solutions agree. In other words, we assume that there exist a vector-space isomorphism $\Psi: T_pU \to T_{\bar{p}}\bar{U}$ such that, 
\begin{align*}
g(X_{(1)}, X_{(2)}) &= \gb(\Psi_*X_{(1)},\Psi_*X_{(2)}) \\
\nabla^{(k)}_{X_{(1)}\ldots X_{(k)}}\phi &=  \bar{\nabla}^{(k)}_{\Psi_*X_{(1)}\ldots \Psi_*X_{(k)}}\phi \\
\nabla^{(k)}_{X_{(1)}\ldots X_{(k)}}R(X_{(k+1)}, \ldots, X_{(k+4)}) &= \bar{\nabla}^{(k)}_{\Psi_*X_{(1)}\ldots \Psi_*X_{(k)}}\Rb(\Psi_*X_{(k+1)}, \ldots, \Psi_*X_{(k+4)})\end{align*}
for all $X_{(j)}\in T_pU$. Then there exists two neighborhoods $p\in V \subset U$, $\bar{p} \in \bar{V} \subset \bar{U}$ and a diffeomorphism $f:V \to \bar{V}$, such that $(g,\phi)$ and $(f^*\gb,f^*\phib)$ agree on $V$.
\end{theorem}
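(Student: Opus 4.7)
The plan is to produce a candidate diffeomorphism from normal coordinates, pull the second solution onto the same domain as the first, and then apply Proposition~\ref{modelproblem} (in the differential-inequality form of Remark~\ref{technicalremark}) to the difference. Concretely, I would set $f := \exp_{\bar p}\circ \Psi \circ (\exp_p)^{-1}$, defined on a convex normal ball $V$ around $p$, where the two exponential maps are taken relative to $g$ and $\bar g$ respectively. Then $f$ is a diffeomorphism onto some $\bar V$ carrying radial $g$-geodesics through $p$ to radial $\bar g$-geodesics through $\bar p$. Let $(\tilde g, \tilde\phi) := (f^*\bar g, f^*\bar\phi)$; this is a second Einstein-scalar-field solution on $V$ with $\tilde g(p) = g(p)$, and with the radial curves from $p$ being geodesics for both metrics. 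Starting from a common $g(p)$-orthonormal frame at $p$ and parallel-transporting along the radial geodesics with respect to $g$ and $\tilde g$ separately yields two frames $\{e_i\}, \{\tilde e_i\}$ on $V$ with associated coordinate and connection coefficients; assemble these into vectors $v, \tilde v$ as in \eqref{eq1}, and assemble $R, \phi, \nab\phi, \nab^2\phi$ (and their tilde-analogues) into $u, \tilde u$ as above \eqref{eq3}.

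The hypothesis that the covariant derivatives of $R$ and $\phi$ at $p$ correspond under $\Psi$ to those at $\bar p$ translates, via the Taylor expansion of the metric and scalar field in geodesic normal coordinates, into the statement that $\delta u := u - \tilde u$ and $\delta v := v - \tilde v$ vanish to infinite order at the origin. Next I would derive the system satisfied by $(\delta u, \delta v)$ by subtracting two copies of \eqref{main}; this produces $\triangle_g \delta u$ and $Y(\delta v)$ on the left, while every multilinear term on the right of \eqref{main} contributes a product in which at least one factor is a $\delta$-quantity and the remaining factors are smooth and uniformly bounded on a small ball. The $C^{3,1}$ regularity of $V$ ensures that the coefficient block $B$ involving $V'(\phi), V''(\phi), V'''(\phi)$ yields only Lipschitz differences under $\phi \mapsto \tilde\phi$. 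In addition there is the error term $(\triangle_g - \triangle_{\tilde g})\tilde u$ coming from the mismatch of Laplacians; schematically this is $(g-\tilde g)^{\alpha\beta}\p^2\tilde u + (\G - \tilde\G)\p\tilde u$, and since both $g - \tilde g$ and $\G - \tilde\G$ are algebraically controlled by $\delta v$, this term is also of the form $O(|\delta v|)$ on any sufficiently small ball.

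The main obstacle is the bookkeeping in this derivation: one must verify that the right hand side contains no second derivative of $\delta u$ and no first derivative of $\delta v$ except through products compatible with the model inequality \eqref{model2}. This is precisely the pair of obstructions identified in the introduction, and it is resolved exactly because Section~\ref{PDEODESystemSection} prolonged $v$ to include $\p \G$ and carried the scalar field along with two extra derivatives, so that derivatives of the Ricci difference are controlled by $\delta u$ without further loss of order. Once this is in place, Remark~\ref{technicalremark} applied to $(\delta u, \delta v)$ yields $\delta u \equiv \delta v \equiv 0$ on some $B_R \subset V$: vanishing of $\delta v$ forces the frame and connection coefficients to coincide, hence $g = \tilde g$ on $B_R$, while vanishing of the $\phi$-component of $\delta u$ gives $\phi = \tilde\phi$. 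Therefore $(g,\phi) = (f^*\bar g, f^*\bar\phi)$ on $B_R$, proving the theorem with $V := B_R$ and $\bar V := f(B_R)$.
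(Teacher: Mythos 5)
Your proposal is essentially the paper's proof. The paper reduces to $U = \bar U$, $p = \bar p$, $\Psi = \mathrm{Id}$ and then defines $f$ by identifying normal coordinate systems, which is precisely your $f = \exp_{\bar p}\circ\Psi\circ(\exp_p)^{-1}$; both arguments then exploit that $Y = r\partial_r$ has the same coordinate expression for both metrics, subtract the two copies of the coupled PDE-ODE system \eqref{main}, verify that every term on the right side is a product of a bounded coefficient with a $\delta$-quantity (with the $C^{3,1}$ hypothesis on the potential handling the $B$-block), and invoke Remark~\ref{technicalremark}.
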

\begin{proof} Without loss of generality, we can assume that $U = \bar{U}$, $p = \bar{p}$, and $\Psi = Id$. (We can always extend $\Psi$ to a smooth diffeomorphism of a (possibly smaller) neighborhood of $p$ to a neighborhood of $\bar{p}$, and replace $\gb$ and $\phib$ by their pull-backs.) We begin by considering the construction of the diffeomorphism $f$. Take normal co\"ordinates $\{x^\a\}$ and $\{y^\a\}$ for the metrics $g$ and $\gb$ respectively, such that the vectors $\partial_{x^\a}  |_p = \partial_{y^\a}$. Using the standard Taylor expansion for the metric in normal co\"ordinate system
\[ g_{ij} = \delta_{ij} - \frac{1}{3}R_{i\alpha\beta j}x^\alpha x^\beta + O(|x|^3) \]
we see that the ``infinite order agreement'' of the Riemann curvature tensor assumed in the statement of the theorem implies that all co\"ordinate derivatives
\[ \partial^{(k)}_{x^\alpha \ldots x^\kappa} g_{\mu\nu} (p) = \partial^{(k)}_{y^\alpha \ldots y^\kappa} \bar{g}_{\mu\nu}(p) \]
for $g$ and $\gb$ in their respective normal co\"ordinates agree at $p$. This defines our diffeomorphism $f$. In what follows, all of the barred quantities (e.g. $\gb, \Rb, \phib$) are implicitly pulled-back via the diffeomorphism $f$, and so we work on a fixed, common co\"ordinate system. The crucial advantage of this construction is that the co\"ordinate expressions for the radial vector field $Y = r\p_r$ is the same.

For  $(g, \phi)$, we have the PDE-ODE system \eqref{main}; for $(\gb,\phib)$, the same construction applies. We simply replace each quantity by the same letter with a bar above it.

\begin{equation}\label{mainb}
 \left\{ \begin{array}{rl}
\triangle_{\gb} \ub &= \bar{B}*(\ub + \ub*\ub) + \vb * \ub * (1 + \ub + \vb)
+ \ub*\partial\ub\\
 Y(\vb) &= A*\vb*(1+ \vb + \ub + \partial\ub + \vb*\ub + \vb*\p \ub)
        \end{array}\right.
\end{equation}

Let $\delta u =u -\ub$ and $\delta v = v -\vb$. We take the difference
of \eqref{main} and \eqref{mainb}, as we commented in the
introduction, since we have already coupled the Christoffel symbols in
the system, every term can be written as the product of a fixed
quatity with either $\delta u$ or $\delta v$. In fact, since $Y$ is
the same for both metrics, the expression for $A$ is the same for the
two systems and will not cause any trouble; for the differences in $B$, 
since we assume $V$ is $C^{3,1}$, it can be bounded by $C|\phi
-\phib|$ which is $C \delta u$. In summary, we have
\begin{equation*}
\left\{ \begin{array}{rl}
    |\triangle_g \delta u| & \leq C( |\delta u| + |\nab \delta u| + |\delta v|)\\
    |Y(\delta v)| &\leq C(|\delta v| + |\delta u| + |\nab \delta u|)
        \end{array}\right.
\end{equation*}
Now the Remark \ref{technicalremark} applies which shows $\delta u$
and $\delta v$ both vanish identically in a small neighborhood of $P$. 
The theorem follows.
\end{proof}

Combining the strong unique continuation properties proved in the above theorem, and using techniques from the proof of the monodromy theorem (see, e.g. Theorem 3 in \cite{Myers}), we easily obtain the following corollary:
\begin{corollary}
 Let $U$ and $\bar{U}$ be two simply connected smooth manifolds. Let $(g,\phi)$ and $(\gb,\phib)$ be solutions to the Einstein-scalar equations on $U$ and $\bar{U}$, respectively, and assume that $(U,g)$ and $(\bar{U},\gb)$ are complete. Assuming that there exist points $p\in U$ and $\bar{p}\in \bar{U}$ such that the solutions $(g,\phi)(p)$ and $(\gb,\phib)(\bar{p})$ agree to infinite order (see the above theorem for exact conditions), then there exists a global diffeomorphism $f: U\to\bar{U}$ such that $(g,\phi) = (f^*g, f^*\phi)$ on $U$. 
\end{corollary}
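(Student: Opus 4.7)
The plan is to follow the classical continuation-plus-monodromy strategy: extend the local isometric diffeomorphism provided by Theorem \ref{maintheorem} along paths in $U$, use simple connectivity to ensure path-independence, and then promote the resulting global local isometry to a diffeomorphism via completeness. To begin, Theorem \ref{maintheorem} produces neighborhoods $V_0 \ni p$ and $\bar{V}_0 \ni \bar p$ together with an isometric diffeomorphism $f_0 : V_0 \to \bar{V}_0$ satisfying $f_0(p) = \bar p$ and $f_0^{\,*}\phib = \phi$.

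For the continuation step, fix $q \in U$ and a path $\gamma : [0,1] \to U$ from $p$ to $q$. Cover $\gamma([0,1])$ by finitely many overlapping normal coordinate balls $V_{\gamma(t_0)}, \ldots, V_{\gamma(t_N)}$. Inductively, suppose $f$ is already defined on $V_{\gamma(t_i)}$; pick any $q_i$ in the overlap $V_{\gamma(t_i)} \cap V_{\gamma(t_{i+1})}$ and set $\bar q_i = f(q_i)$. Since $f$ is an isometric diffeomorphism on $V_{\gamma(t_i)}$ intertwining the scalar fields, the infinite-order agreement hypothesis of Theorem \ref{maintheorem} is automatically satisfied at $(q_i, \bar q_i)$; that theorem then yields an isometric diffeomorphism extending $f$ to $V_{\gamma(t_{i+1})}$, and a second application (at another point of the overlap) confirms compatibility with the previously-defined $f$ on $V_{\gamma(t_i)} \cap V_{\gamma(t_{i+1})}$. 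Iterating produces a map $f_\gamma$ defined along a tubular neighborhood of $\gamma$.

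To make the endpoint value $f_\gamma(q)$ independent of the choice of path, I invoke the monodromy argument. Given a homotopy rel endpoints $H : [0,1]^2 \to U$ between two paths $\gamma_0, \gamma_1$ from $p$ to $q$, uniform continuity of $H$ allows one to cover its image by a finite grid of coordinate balls along which the extension construction can be carried out simultaneously for all parameters $s$ in a small subinterval. The set $\{s \in [0,1] : f_{H(\cdot, s)}(q) = f_{H(\cdot, 0)}(q)\}$ is then both open and closed, hence equal to $[0,1]$. Since $U$ is simply connected, any two paths from $p$ to $q$ are homotopic rel endpoints, so $f(q) := f_\gamma(q)$ is well-defined, and assembling these values gives a global local isometry $f : U \to \bar U$ with $f^{\,*}\phib = \phi$.

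The main obstacle I anticipate is the final promotion of $f$ to a bijective diffeomorphism. By construction $f$ is a smooth local isometry, hence a local diffeomorphism everywhere. Completeness of $(U,g)$ then implies, by the classical theorem of Ambrose (see e.g.\ Kobayashi--Nomizu), that $f$ is a covering map: complete geodesics in $U$ project to complete geodesics in $\bar U$, and conversely geodesics in $\bar U$ can be lifted back to $U$ using the local isometry together with completeness. Simple connectivity of $\bar U$ then forces the cover to be trivial, so $f$ is a global diffeomorphism. The relations $g = f^{\,*}\gb$ and $\phi = f^{\,*}\phib$ hold by construction, completing the proof.
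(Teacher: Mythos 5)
Your proposal follows essentially the same monodromy-plus-completeness strategy that the paper sketches, just with more of the standard details filled in (covering the path by normal balls, the open--closed argument across a homotopy, the Ambrose covering-map step). The paper's proof is only a sketch citing Myers's monodromy theorem and appealing to the ``essentially unique'' extension guaranteed by Theorem \ref{maintheorem}, so there is no substantive divergence between your argument and theirs.
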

To quickly sketch a proof: using the strong unique continuation property, it is simple to construct isometries in tubular neighborhoods of geodesics originating from $p$ and $\bar{p}$, once we observe that the above theorem guarantees an essentially unique way for this construction (the only possible non-uniqueness comes from a symmetry of the original solutions). From here, by completeness, we guarantee that $(U,g,\phi)$ and $(\bar{U},\gb,\phib)$ are locally identical. The vanishing of the first fundamental group then allows us to patch the local isometries to a well-defined global one in the standard manner.


\begin{thebibliography}{99}

\bibitem{Alexakis} S. Alexakis, \textit{Unique continuation for the vacuum Einstein equations}, Preprint (2008).

\bibitem{AIK} S. Alexakis, A. D. Ionescu, and S. Klainerman \textit{Hawking's local rigidity theorem without analyticity}, Preprint (2009).

\bibitem{Besse}   Arthur L. Besse, \textit{Einstein manifolds}, Ergebnisse der Mathematik und ihrer Grenzgebiete (3), {\bf 10}. Springer-Verlag, Berlin, 1987.

\bibitem{Biquard} Olivier Biquard, \textit{Continuation unique a partir de l'infini conforme pour les metriques d'Einstein}, Preprint (2007)

\bibitem{DK} Dennis M. DeTurck and Jerry L. Kazdan, \textit{Some regularity theorems in Riemannian geometry}, Ann. Sci. \'Ecole Norm. Sup. (4) {\bf 14} (1981), no. 3, 249--260.

\bibitem{Kazdan}  Jerry L. Kazdan, \textit{Unique continuation in geometry},  Comm. Pure Appl. Math. {\bf 41}  (1988),  no. 5, 667--681.

\bibitem{Myers} Sumner Byron Myers, \textit{Riemannian manifolds in the large}.  Duke Math. J.  {\bf 1}  (1935),  no. 1, 39--49.

\bibitem{Sogge}   Christohper D. Sogge, \textit{Strong Uniqueness Theorems for Second Order Elliptic Differential Equations}, Amer. J. Math.  {\bf 112}  (1990),  no. 6, 943--984.

\bibitem{Taubes}  Clifford H. Taubes, \textit{Unique continuation theorems in gauge theories},   Comm. Anal. Geom.  {\bf 2}  (1994),  no. 1, 35--52.

\end{thebibliography}
 \end{document}